\theoremstyle{definition}
\newtheorem{theorem}{Theorem}[section]
\newtheorem{lemma}[theorem]{Lemma}
\newtheorem{definition}[theorem]{Definition}
\newtheorem{corollary}[theorem]{Corollary}
\newtheorem{proposition}[theorem]{Proposition}
\DeclareMathOperator*{\ulalim}{\underrightarrow\lim}
\newcommand{\HX}{\mathrm{HX}}
\newcommand{\CX}{\mathrm{CX}}
\newcommand{\directlimit}{\ulalim_{i}}
\newcommand{\cech}{\v{C}ech }
\title{Coarse Separation of Coarse $PD(n)$ spaces}
\date{\vspace{-5ex}}
\author{Harsh Patil}
\begin{document}

\maketitle
\begin{abstract}
    We show that if a subspace $A$ of a coarse $PD(n)$ metric space $X$  coarsely separates it then it must have asymptotic dimension at least $n-1$. 
\end{abstract}
\section{Introduction}
A subspace $A$ of a topological space $X$ is said to separate $X$ if the complement $X-A$ consists of more than one connected component.  The classical Jordan curve theorem states that every simple closed curve separates the Euclidean plane and the complement consists of two connected components. 
The analogous notion in the coarse category of metric spaces is that of coarse separation. Informally, a subspace $A$ of a metric space is said to coarsely separates $X$ if there exist two disjoint sets $B_{1}$ and $B_{2}$ such that neither $B_{1}$ nor $B_{2}$ are contained in a finite neighborhood of $A$ and for every $R>0$ there exists a $R'>0$ such that any two points $x_{1}\in B_{1}$ and $x_{2}\in B_{2}$ that lie within a distance of  $R$ from each other are always contained in the $R'$ neighborhood of $A$. 
Recently, the authors of \cite{Bensaid2024CoarseSA} proved various necessary conditions for separating subsets of a variety of spaces including symmetric spaces of non-compact type, horocyclic product of trees, and Bourdon's hyperbolic buildings.  
 In his recent preprint  \cite{tselekidis2024s}, 
Tselekidis makes the following conjecture:
\begin{center}
    $\mathbb{R}^n$ cannot be separated by a subspace of asymptotic dimension  strictly less than $n-1$. 
\end{center}

We give a solution to this conjecture and prove it for the larger collection of Coarse $PD(n)$ spaces:
\begin{theorem}\label{main}
     Let $X$ be a coarse $PD(n)$ space and let $A$ be a subspace of $X$ such that $A$ separates $X$. Then, the asymptotic dimension of $A$ must be at least $n-1$.  
\end{theorem}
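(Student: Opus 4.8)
The plan is to convert the geometric separation hypothesis into a statement about coarse (co)homology and then play it off against coarse Poincar\'e duality. The starting observation is that ``$A$ coarsely separates $X$'' should be equivalent to the nonvanishing of a reduced zero-dimensional coarse invariant of the complement: the two deep sets $B_{1}, B_{2}$ furnished by the definition of coarse separation ought to represent two distinct classes, so that the reduced coarse homology $\overline{\HX}{}_{0}(X\setminus A)$ (or, dually, the degree-zero reduced coarse cohomology of the complement) is nontrivial. The first step, then, is to make this precise: to show that the existence of two deep, coarsely-separated components produces a genuine nonzero class, and to set up the long exact sequence of the pair $(X, X\setminus A)$, equivalently a coarse Mayer--Vietoris sequence for the coarse decomposition $X \simeq B_{1}\cup A \cup B_{2}$ in which the coarse intersection of the two pieces is controlled by $A$.

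The second and central step is to invoke coarse Poincar\'e duality for $X$. Since $X$ is a coarse $PD(n)$ space, there is a duality isomorphism relating the locally finite coarse homology of $X$ to its coarse cohomology with a degree shift by $n$. Transporting the long exact sequence of the pair through this duality turns the relative groups of $(X, X\setminus A)$ into the coarse cohomology of $A$ in complementary degree. Concretely, I expect the degree-zero reduced invariant of the complement to correspond, under the coarse Alexander duality built from coarse Poincar\'e duality, to the degree-$(n-1)$ reduced coarse cohomology $\overline{\HX}{}^{\,n-1}(A)$. Thus separation should force $\overline{\HX}{}^{\,n-1}(A)\neq 0$.

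The third step is the dimension bound. It is a general fact that asymptotic dimension controls coarse cohomological dimension: if $\operatorname{asdim}(A)\le m$, then $A$ admits uniformly bounded covers of multiplicity $m+1$ at every scale, whose nerves have dimension $\le m$, and computing coarse cohomology as a direct limit over such nerves---the \cech picture underlying the $\directlimit$ construction---shows that $\HX^{k}(A)=0$ for all $k>m$. Applying this: if we had $\operatorname{asdim}(A)\le n-2$, then $\overline{\HX}{}^{\,n-1}(A)=0$, contradicting the conclusion of the previous step. Hence $\operatorname{asdim}(A)\ge n-1$, as claimed.

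I expect the main obstacle to be the second step---establishing, with full rigor, the coarse Alexander duality that carries the zero-dimensional separation class of the complement to a nonzero class in $\overline{\HX}{}^{\,n-1}(A)$. This requires care because $A$ itself need not be uniformly contractible or a coarse $PD$-space, so one cannot apply duality to $A$ directly; instead one must work relatively on $X$, control the deep complementary components coarsely so as to distinguish them from spurious bounded pieces, and verify that the nonzero zero-dimensional class survives the connecting homomorphisms rather than being killed. Matching the ``two deep components'' appearing in the definition of coarse separation precisely with a nonzero reduced class, and tracking the reduced-versus-unreduced bookkeeping across the duality isomorphism, is where the real work will lie.
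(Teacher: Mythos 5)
Your overall skeleton---a separation class for the complement, coarse Alexander duality, and a bound on (co)homological dimension by asymptotic dimension---is exactly the paper's skeleton, but the way you route it contains two genuine gaps. The first is in your step 1: the reduced degree-zero coarse homology of the metric subspace $X\setminus A$ does \emph{not} detect coarse separation. For example, if $A$ is a hyperplane in $\mathbb{R}^{n}$, the subspace $\mathbb{R}^{n}\setminus A$ (with the restricted metric) is coarsely dense in $\mathbb{R}^{n}$, so the inclusion is a coarse equivalence and $\mathbb{R}^{n}\setminus A$ is coarsely connected; the two half-spaces are at distance zero from each other near $A$, so no nonzero degree-zero class appears. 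The object that does detect separation is the Banerjee--Okun \emph{coarse cohomology of the complement}, which is a relative construction on $X$ and not an invariant of the subspace $X\setminus A$, and in their indexing the separation class lives in degree one: $\HX^{1}(X-A)\neq 0$ (Proposition \ref{sep} in the paper, quoted from Banerjee--Okun). The paper simply cites this; your Mayer--Vietoris plan would have to be carried out for that relative object, essentially rebuilding part of their machinery.

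The second gap is more serious and costs exactly the dimension at stake. You route the duality so as to land in coarse \emph{cohomology} of $A$, claiming $\overline{\HX}{}^{\,n-1}(A)\neq 0$, and then invoke ``$\operatorname{asdim}(A)\le m$ implies $\HX^{k}(A)=0$ for $k>m$, computed as a direct limit over nerves.'' But the duality actually available (Theorem \ref{banerjeeokun}) reads $\HX^{k}(X-A)\cong \HX_{n-k}(A)$, i.e.\ it pairs cohomology of the complement with \emph{homology} of $A$; the cohomological variant you posit is not that theorem and would need its own proof. Moreover, your vanishing argument does not work for cohomology as stated: cohomology is contravariant, so the anti-\cech maps $N(\mathcal{U}_{i})\rightarrow N(\mathcal{U}_{j})$ induce an \emph{inverse} system on compactly supported cohomology, and the comparison with coarse cohomology carries a Milnor ${\varprojlim}^{1}$ term one degree above the nerve dimension. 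With $m$-dimensional nerves this only yields $\HX^{k}(A)=0$ for $k>m+1$, since ${\varprojlim}^{1}$ of the degree-$m$ groups can be nonzero in degree $m+1$; so from $\HX^{n-1}(A)\neq 0$ you could conclude only $\operatorname{asdim}(A)\ge n-2$, unless you additionally prove the relevant ${\varprojlim}^{1}$ vanishes. The paper avoids this entirely by staying on the homology side, where the anti-\cech system gives a \emph{direct} system and direct limits of abelian groups are exact: Theorem \ref{direct_limit} ($\HX_{k}(X)\cong \directlimit H^{lf}_{k}(N(\mathcal{U}_{i}))$) and Corollary \ref{final} give the clean bound $\operatorname{asdim}(A)\geq \sup\{k \mid \HX_{k}(A)\neq 0\}$, which is the paper's actual technical content. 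The fix is to keep your skeleton but use the duality in its homological form and prove the homological dimension bound.
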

Coarse $PD(n)$ spaces were introduced first by Kapovich and Kleiner \cite{KapovichKleiner} as  metric simplicial complexes whose large scale geometric properties are similar to that of a universal cover of a compact manifold. Banerjee and Okun \cite{banerjee2023coarse} recast it in the language of coarse cohomology and and gave a more general definition in the coarse category of metric spaces.

The main tool used in the proof of Theorem \ref{main} is a coarse analogue of Alexander Duality proved by Okun and Banerjee in \cite{banerjee2023coarse}. 

We obtain the following corollary as an immediate consequence
\begin{corollary}
    Let $G$ be a $PD(n)$ group such that $G$ splits along a subgroup $C$. Then, $C$ must have asymptotic dimension at least $n-1$.  
\end{corollary}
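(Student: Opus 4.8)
The plan is to reduce the corollary to Theorem~\ref{main}. Writing $G$ for the metric space obtained by equipping $G$ with a word metric coming from a finite generating set, it suffices to establish two things: first, that $G$ is a coarse $PD(n)$ space; and second, that a (nontrivial) splitting of $G$ along $C$ forces $C$, viewed as a subspace of $G$, to coarsely separate it. Granting these, Theorem~\ref{main} applied with $X=G$ and $A=C$ yields $\operatorname{asdim}(C)\geq n-1$ at once.

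For the first point I would appeal to the coarse-cohomological framework of Banerjee--Okun \cite{banerjee2023coarse}: a finitely generated $PD(n)$ group is exactly one whose large-scale geometry realizes Poincaré duality (the coarse avatar of the defining conditions $H^{k}(G;\mathbb{Z}G)=0$ for $k\neq n$ and $H^{n}(G;\mathbb{Z}G)=\mathbb{Z}$), and since the coarse $PD(n)$ condition is an invariant of the underlying coarse structure, $G$ with its word metric is a coarse $PD(n)$ space; this is implicit already in the metric-simplicial models of Kapovich--Kleiner \cite{KapovichKleiner}.

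The geometric heart of the argument is the second point. A splitting of $G$ along $C$ yields, via Bass--Serre theory, an action of $G$ on a tree $T$ with a single edge orbit, an edge $e$ whose stabilizer is a conjugate of $C$, and whose removal disconnects $T$ into two half-trees $T_{1}\ni v$ and $T_{2}$, where $v$ is a fixed endpoint of $e$. The orbit map $\phi\colon G\to T$, $g\mapsto g\cdot v$, is coarsely Lipschitz, and I would define the two sides as $B_{1}=\phi^{-1}(T_{1})$ and $B_{2}=\phi^{-1}(T_{2})$, each nonempty. Nontriviality of the splitting makes both half-trees unbounded; since $C$ fixes $v$ we have $\phi(C)=\{v\}$, so coarse Lipschitzness of $\phi$ forces any $g$ whose image is deep in $T_{i}$ to be far from $C$ in $G$, which shows that neither $B_{1}$ nor $B_{2}$ lies in a finite neighborhood of $C$. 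For the separation condition, any $x_{1}\in B_{1}$ and $x_{2}\in B_{2}$ at word-distance at most $R$ have tree-images on opposite sides of $e$ joined by a path of uniformly bounded length, so both images lie near $e$. This is the classical assertion that a splitting over $C$ makes $C$ a codimension-one subgroup, i.e.\ $\tilde{e}(G,C)\geq 2$.

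The step I expect to be the main obstacle is the last transfer: the orbit map $\phi$ is lossy, since entire vertex groups collapse to single vertices, so proximity to $e$ in the tree metric does not by itself control proximity to $C$ in the word metric of $G$. Making the wall condition precise therefore requires arguing in the intrinsic geometry of $G$ rather than in $T$, showing that the orbit of $C$ coarsely captures the region one must cross to pass from $B_{1}$ to $B_{2}$; this is exactly the content of the codimension-one and relative-ends machinery (Scott--Swarup, Kropholler--Roller, Sageev), which guarantees that the combinatorial splitting does produce a genuine coarse separation in the metric sense demanded by Theorem~\ref{main}. With that in place the theorem applies verbatim and the corollary follows.
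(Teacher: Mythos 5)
Your reduction is exactly the intended one: the paper offers no proof of this corollary, stating it as an immediate consequence of Theorem \ref{main}, and the two inputs you isolate — that a $PD(n)$ group with a word metric is a coarse $PD(n)$ space (Banerjee--Okun), and that a nontrivial splitting along $C$ makes $C$ coarsely separate $G$ — are precisely what make it immediate. Your argument is correct, including the appeal to the ends-of-pairs literature for the second input. One remark: the obstacle you flag in your last paragraph is less serious than you fear, and your own Bass--Serre setup closes it without invoking Scott--Swarup, Kropholler--Roller, or Sageev. The useful observation is not that points of $B_{1}$ and $B_{2}$ that are close in $G$ have tree-images near $e$ (as you say, that controls nothing, since whole vertex groups sit over single vertices), but that any edge $(g,gs)$ of the Cayley graph with $g\in B_{1}$, $gs\in B_{2}$ forces the tree-geodesic $g\cdot[v,sv]$ to cross the edge $e$, because $e$ separates $gv$ from $gsv$. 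Writing $h_{1}e,\dots,h_{m}e$ for the finitely many edges of the finitely many paths $[v,sv]$ ($s$ a generator) that lie in the $G$-orbit of $e$, this gives $gh_{i}\in \operatorname{Stab}(e)=C$ for some $i$, i.e.\ $g\in Ch_{i}^{-1}$. Hence the endpoints of all Cayley edges crossing from $B_{1}$ to $B_{2}$ lie in finitely many right cosets of $C$, so in some bounded neighborhood $N_{R_{0}}(C)$; the wall condition $N_{R}(B_{1})\cap N_{R}(B_{2})\subset N_{3R+R_{0}}(C)$ then follows by routing a path of length at most $2R$ from $B_{1}$ to $B_{2}$ through such a crossing edge. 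Deepness of $B_{1}$ and $B_{2}$ is as you argue: nontriviality of the splitting means the Bass--Serre tree has no leaves, so both half-trees contain orbit points arbitrarily far from $v$, and your coarse-Lipschitz estimate converts this into points of $B_{i}$ arbitrarily far from $C$. With this substitution your proof becomes self-contained rather than resting on the codimension-one machinery.
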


\section{Preliminaries}
\subsection{Coarse Separation}
We recall what it means for a subspace to coarsely separate a metric space.
\begin{definition}[Coarse Separation]
Let $X$ be a metric space and $A\subset X$. A subset $C\subset X$ is a \textit{coarse complementary component} of $A$ if for any $R>0$ there exists a $R'>0$ such that $N_{R}(C)\cap N_{R}(X-C)\subset N_{R'}(A)$. 
$C$ is a \textit{shallow} coarse complementary component of $A$ if $C\subset N_{r}(A)$ for some $r>0$. Otherwise it is \textit{deep}. We say that $C$ gives rise to a \textit{deep separation} with respect to $A$ if  both $C$ and $X-C$ are deep coarse complementary components of $X-A$.  Let $A\subset X$. We will say that $A$ \textit{coarsely separates} $X$ if there exists a deep separation of $X$ with respect to $A$. 
\end{definition}

\subsection{Coarse Homology}

Let $X$ be a metric space and let $X^{(n)}$ denote the $n$-fold Cartesian product $X$. An $n$-chain is a formal sum $\alpha=\sum\limits_{\sigma\in X^{(n+1)}} a_{\sigma}\sigma$,  $a_{\sigma}\in \mathbb{Z}$. The \textit{support} of $\alpha$ is defined as the set of all simplices $\sigma$ whose coefficient is non-zero $supp(\alpha)=\{\sigma|a_{\sigma} \neq 0\}$. 
Let $\CX_{n}(X)$ denote the set of  $n$-chains that satisfy the following two properties: 
\begin{enumerate}
    \item For any bounded subset $B$ of $X$, there are only finitely many simplices $\sigma$ such that $a_{\sigma}\neq 0$ and $\sigma$ has all of its vertices in $B$. 
    \item There exists a constant $C>0$ such that  for any $\sigma=(x_{0},x_{1},\dots,x_{n})$ we have,
    $ \textrm{sup }\{ d(x_{i},x_{j})\}_{i,j}<C$
\end{enumerate}
There is a natural boundary map $\partial:\CX_{n}(X)\rightarrow \CX_{n-1}(X)$  defined on $X^{(n+1)}$ by,
$$\partial(x_{0},\dots,x_{n}):= \sum_{i=0}^{n}(-1)^{i}(x_{0},\dots ,\hat{x}_{i},\dots,x_{n}) $$
and extended linearly on all of $\CX_{\ast}(X)$.
The homology of the chain complex $(\CX_{\ast}(X),\partial)$ at the $n$-th
 term is call the \textit{coarse homology} of $X$ and is denoted by $\HX_{n}(X)$.  
 \newline

We do not work directly with the coarse cohomology of the complement in this paper. The only facts about coarse cohomology of the complement that we will use are Proposition \ref{sep} and Theorem \ref{banerjeeokun} stated below. 
For the original definition of coarse cohomology of the complement we refer the reader to \cite{banerjee2023coarse}. 
\begin{proposition}\cite[Lemma 5.6]{banerjee2023coarse}\label{sep}
     $\HX^{1}(X-A)$ is non-trivial if $A$ coarsely separates $X$.
\end{proposition}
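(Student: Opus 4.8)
The plan is to manufacture an explicit nonzero class in $\HX^1(X-A)$ directly out of the separation data. Since $A$ coarsely separates $X$, I fix the deep separation guaranteed by the definition: a subset $C\subset X$ such that for every $R>0$ there is an $R'>0$ with $N_R(C)\cap N_R(X-C)\subset N_{R'}(A)$, and such that \emph{both} $C$ and $X-C$ are deep, i.e.\ neither is contained in any $N_r(A)$. Geometrically this says that the interface between $C$ and its complement is coarsely pinched into $A$, while the two sides each escape to infinity away from $A$; it is exactly this configuration that the first coarse cohomology of the complement is designed to detect.

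First I would build the candidate cocycle from the indicator function $\phi=\chi_{C}\colon X\to\mathbb{Z}$, the $0$-cochain equal to $1$ on $C$ and $0$ on $X-C$. The key (and definition-independent) computation is that its coboundary $\delta\phi$, evaluated on a controlled $1$-simplex $(x_0,x_1)$, equals $\phi(x_1)-\phi(x_0)$, which is nonzero only when $x_0$ and $x_1$ lie on opposite sides of the separation while remaining within bounded distance of one another. By the interface condition such simplices have all their vertices in $N_R(C)\cap N_R(X-C)\subset N_{R'}(A)$, so $\delta\phi$ is a controlled cocycle supported in a neighborhood of $A$. This is precisely the type of datum representing a class in the coarse cohomology of the complement, and passing to the appropriate term of the directed system $\directlimit$ used to define $\HX^{\ast}(X-A)$ in \cite{banerjee2023coarse} yields a well-defined class $[\delta\phi]\in\HX^1(X-A)$.

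The heart of the matter is to show $[\delta\phi]\neq 0$, and this is where deepness on \emph{both} sides is indispensable. If the class vanished, then unwinding the definition of $\HX^1(X-A)$ would supply a controlled $0$-cochain $\eta$, supported near $A$, with $\delta\eta=\delta\phi$. Then $\phi-\eta$ is a coarse cocycle, $\delta(\phi-\eta)=0$, hence a coarsely locally constant function on $X$; using that $X$ is coarsely connected, $\phi-\eta$ is coarsely constant, say with value $c$. Since $\eta$ is supported near $A$, away from $A$ the function $\phi$ itself agrees with $c$ up to bounded error. But $\phi$ takes only the values $0$ and $1$, so whichever of $C$, $X-C$ corresponds to the value $\neq c$ must be contained in a bounded neighborhood of $A$, i.e.\ it is shallow, contradicting the hypothesis that the separation is deep. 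I expect this final step to be the main obstacle: translating the algebraic vanishing of $[\delta\phi]$ into the geometric statement "one side is shallow" requires careful bookkeeping of the control functions and of the direct-limit structure in the Banerjee--Okun definition of $\HX^{\ast}(X-A)$. Everything preceding it is a direct transcription of the coarse separation into cochain-level identities.
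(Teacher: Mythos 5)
There is nothing in the paper to compare your argument against: Proposition \ref{sep} is imported verbatim from \cite[Lemma 5.6]{banerjee2023coarse}, and the author deliberately avoids even stating the definition of $\HX^{\ast}(X-A)$, using the proposition purely as a black box. So the comparison can only be with the cited source, and your sketch is (in outline) exactly the natural proof of that lemma. The skeleton is sound: $\delta\chi_{C}(x_{0},x_{1})=\chi_{C}(x_{1})-\chi_{C}(x_{0})$ is supported on straddling pairs, and for pairs with $d(x_{0},x_{1})\leq R$ the separation condition $N_{R}(C)\cap N_{R}(X-C)\subset N_{R'}(A)$ places both vertices in $N_{R'}(A)$; in the Banerjee--Okun framework, where Roe's requirement that controlled supports be \emph{bounded} is relaxed to \emph{contained in some $N_{r}(A)$}, this is precisely the condition making $\delta\chi_{C}$ a $1$-cocycle of the complement. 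Likewise, vanishing of its class at any stage of the defining direct system yields a $0$-cochain $\eta$ with $\operatorname{supp}(\eta)\subset N_{r}(A)$ and $\delta\eta=\delta\chi_{C}$, and your deepness argument then applies; since the class dies in the limit if and only if it dies at some finite stage, nonvanishing at every stage suffices. Two refinements are worth making. First, because $\delta\eta=\delta\chi_{C}$ is an equality of functions on all of $X^{2}$, the difference $\chi_{C}-\eta$ is genuinely constant, so your appeal to coarse connectedness of $X$ is unnecessary -- which matters, since the proposition is stated for an arbitrary metric space $X$, not only for coarse $PD(n)$ spaces. Second, the endgame is cleaner if you pick one point of $C$ and one point of $X-C$ outside $N_{r}(A)$ (both exist by deepness of both sides): the constant would then have to equal $1$ and $0$ simultaneously. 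The only thing separating your write-up from a complete proof is the issue you flagged yourself: every claim about what a class in $\HX^{1}(X-A)$ is, and what its vanishing supplies, hinges on the precise Banerjee--Okun definition, which neither you nor this paper states. Modulo that interface, your argument reconstructs the cited proof rather than offering a genuinely different route.
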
    
\subsection{coarse $PD(n)$ spaces}
We do not work directly with the definition of coarse $PD(n)$ spaces. We refrain from giving the definition a of coarse $PD(n)$ space. We refer the interested reader to \cite{banerjee2023coarse}. The only fact about coarse $PD(n)$ space used is the theorem by Banerjee-Okun:  
\begin{theorem}\cite[Theorem 1.1]{banerjee2023coarse}\label{banerjeeokun}
    If $X$ is a coarse $PD(n)$ space then for any $A\subset X$, then
    $$\HX^{k}(X-A)\cong \HX_{n-k}(A).$$
\end{theorem}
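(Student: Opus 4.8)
The plan is to prove the isomorphism as a relative, large-scale form of Poincar\'e duality implemented by capping with a coarse fundamental class. The coarse $PD(n)$ hypothesis equips $X$ with a fundamental class $[X]$ and, at the chain level, with a cap-product map $\cap [X]\colon \CX^{\ast}(X)\to \CX_{n-\ast}(X)$ that is a chain homotopy equivalence; on homology this is the absolute duality $\HX^{k}(X)\cong \HX_{n-k}(X)$. The whole content of the theorem is to promote this to a form that records the subspace $A$, so I would first isolate the chain-level descriptions that make $A$ visible: coarse cohomology of the complement $\HX^{\ast}(X-A)$ is computed by controlled cochains whose supports are pushed off $A$, while coarse homology $\HX_{n-k}(A)$ is computed by controlled chains confined to neighborhoods $N_{r}(A)$, organized as a direct limit $\directlimit$ over the thickness $r$.

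The heart of the argument is that $\cap [X]$ is compatible with these support conditions: a controlled cochain supported away from $A$ caps to a locally finite chain whose support must concentrate near $A$, and conversely the chains absorbed into a neighborhood of $A$ come from cochains supported near $A$. Making this precise produces a map of the two natural long exact sequences --- the sequence relating $\HX^{\ast}(X)$, $\HX^{\ast}(X-A)$, and the cochains supported near $A$, and the coarse long exact sequence of the pair $(X,A)$ relating $\HX_{\ast}(A)$, $\HX_{\ast}(X)$, and the relative group --- with $\cap [X]$ as the vertical comparison. The absolute duality identifies the $X$-columns, a Poincar\'e--Lefschetz-type duality identifies the supported/relative columns, and the five lemma yields the remaining isomorphism; part of this step is checking that the degree bookkeeping produces exactly $\HX^{k}(X-A)\cong \HX_{n-k}(A)$ with no spurious shift.

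To control the comparison I would filter by the thickness $r$ of neighborhoods of $A$ and pass to the colimit. On each finite stage the statement reduces to the absolute $PD(n)$ duality on $X$ together with uniform acyclicity, and since $\cap [X]$ commutes with the structure maps of both direct systems, exactness of $\directlimit$ propagates the isomorphism to the limit. A Mayer--Vietoris induction over a cover of $A$ by uniformly bounded pieces handles globalization, with the base case $A$ bounded reducing both sides to the absolute duality (removing a bounded set is a coarse equivalence, and $\HX_{\ast}$ of a bounded set is that of a point).

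The step I expect to be the main obstacle is the chain-level construction of $\cap [X]$ with the correct support-and-control bookkeeping: one must ensure that capping sends cochains controlled away from $A$ to chains that are simultaneously of bounded propagation and supported in a fixed neighborhood of $A$, that this assignment descends to the complement complex, and that it commutes with the connecting homomorphisms of both long exact sequences. This is exactly where the quantitative geometry of a coarse $PD(n)$ space --- bounded geometry and uniform coarse acyclicity --- is indispensable, since it makes the homotopies realizing $\cap [X]$ uniformly controlled and hence compatible with the colimit over neighborhood thickness. Once this compatibility is in hand, the five lemma and exactness of filtered colimits finish the argument formally.
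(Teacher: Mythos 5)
You should know at the outset that the paper contains no proof of this statement to compare against: it is imported verbatim from \cite{banerjee2023coarse} as Theorem 1.1 and used as a black box, and the author deliberately refrains from even stating the definition of a coarse $PD(n)$ space. So your proposal can only be measured against the cited source. There, the definition (following Kapovich--Kleiner, recast by Banerjee--Okun) does not provide a fundamental class and a cap product; it directly postulates controlled duality chain maps with controlled chain homotopies as part of the structure, and these need not be realized by capping with any class. Your plan therefore begins by assuming strictly more than the hypothesis supplies, and the step you yourself flag as the ``main obstacle'' --- the chain-level construction of $\cap [X]$ with support bookkeeping --- is in general not available; what is available is exactly the controlled duality data, whose bounded displacement makes it compatible with the filtration by neighborhoods $N_{r}(A)$, and that compatibility plus passage to the limit in $r$ is essentially the whole proof. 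No ladder of long exact sequences, five lemma, or Mayer--Vietoris induction is needed, and the squares in your ladder involving connecting maps would anyway require the same support-compatibility crux to commute.

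Two of your steps are concretely wrong, not merely inefficient. First, $\HX^{\ast}(X-A)$ is \emph{not} the coarse cohomology of the metric space $X\setminus A$ (the paper itself warns that ``coarse cohomology of the complement'' is a functor defined by support conditions), so your base case ``removing a bounded set is a coarse equivalence'' yields the wrong answer: for $X=\mathbb{R}^{n}$ and $A$ a single point, the theorem gives $\HX^{n}(X-A)\cong \HX_{0}(A)\cong \mathbb{Z}$, whereas your reduction would give $\HX^{n}(X-A)\cong\HX^{n}(X)\cong \HX_{0}(X)\cong H^{lf}_{0}(\mathbb{R}^{n})=0$. Second, the Mayer--Vietoris induction over a cover of $A$ by uniformly bounded pieces has no finiteness to induct on precisely in the cases of interest ($A$ unbounded, e.g.\ a separating subspace), and coarse homology does not enjoy the excision/colimit behavior over infinite covers that such a globalization would need. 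If you strip out the cap product, the five-lemma ladder, and the induction, and retain only your last paragraph --- controlled duality maps and homotopies respect supports near $A$ up to uniform enlargement, hence induce an isomorphism after the colimit over neighborhood thickness --- you arrive at the actual argument of \cite{banerjee2023coarse}.
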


\subsection{Direct system of groups and chain complexes}
We recall the definition of a direct system of groups and its direct limit. 
\begin{definition}[Direct system of groups]
Let $\{A_{i}\}_{i\in \mathbb{N}}$ denote a sequence of groups and let $\{f^{i,j}\}_{i\leq j}$ denote a collection of maps such that 
\begin{enumerate}
        \item $f^{i,j}$ is a homomorphism from $A_{i}$ to $A_{j}$ for all $i\leq j$,
        \item $f^{i,i}$ is the identity map for all $i$, and
        \item for all $i\leq j \leq k $, $f^{i,k}= f^{j,k}\circ f^{i,j}$. 
    \end{enumerate}
Then, the pair $\langle A_{i}, f^{i,j}\rangle$ is said to be a direct system of groups. 
\end{definition}

\begin{definition}[Direct limit of groups]
    Let $\langle A_{i}, f^{i,j} \rangle$ denote a direct system of groups. Then the direct limit of $\langle A_{i}, f^{i,j} \rangle$, denoted by $\directlimit A_{i} = (A, (\phi_{i})_{i})$, is a group $A$ together with a collection of homomorphisms $\phi_{i} : A_{i} \to A$ such that:
    \begin{enumerate}
        \item $\phi_{j} = \phi_{i} \circ f^{i,j}$ for all $i \leq j$,
        \item For any other group $B$ with homomorphisms $\psi_{i} : A_{i} \to B$ satisfying $\psi_{j} = \psi_{i} \circ f^{i,j}$ for all $i \leq j$, there exists a unique homomorphism $u : A \to B$ such that $\psi_{i} = u \circ \phi_{i}$ for all $i$.
    \end{enumerate}
    The  homomorphisms $\{\phi_{i}\}$ are called the \textit{canonical homomorphisms} associated to the direct limit.  
\end{definition}

A direct limit always exists whenever the direct system consists of abelian groups and it is unique up to isomorphism. We recall the standard construction of the direct limit of a directed system of abelian groups. 
\par
 Let $\langle A_{i},f^{i,j}\rangle$ denote a direct system of abelian groups.  Let $G=\bigoplus A_{i}$. Let $N$ be the subgroup of $G$ generated by the set $\bigcup_{i}\{x-f^{i,j}(x)|x\in A_{i},j\geq i\}$. Define $A:=G/N$. Let $\phi_{i}:A_{i}\rightarrow A$ denote the map obtained by composing  the canonical inclusion  $ \mathfrak{q}:A_{i}\rightarrow G$ by the quotient map $G\rightarrow G/N$.  Then, the pair $(A,\phi_{i})$ is a direct limit for the system $\langle A_{i},f^{i,j}\rangle$. 

\begin{lemma}\label{trivial}
Let $ \langle A_{i},f^{i,j}\rangle$ be a direct system of abelian groups and let $(A,\phi_{i})$ denote its direct limit. Let $g_{1},\dots,g_{k}$ be finitely many elements of  $\bigoplus_{i}A_{i}$ such that, for each $1\leq i\leq k$, $g_{i}$ belongs to $A_{m_{i}}$ for some $m_{i}$. 
    Let $g=g_{1}+g_{2}+\dots+g_{k}\in \bigoplus_{i}A_{i}$.
    $g$ maps to the trivial element under the quotient $G\rightarrow A$ if and only if there exists $M$ such that $M>m_{i}$ for all $1\leq i\leq k$ and $\sum_{i=1}^{k} f^{m_i,M}(g_{i})=0$. 
\end{lemma}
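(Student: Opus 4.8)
The plan is to prove Lemma~\ref{trivial} directly from the explicit construction of the direct limit given just above it, namely $A = G/N$ where $G = \bigoplus_i A_i$ and $N$ is generated by all elements of the form $x - f^{i,j}(x)$ with $x \in A_i$ and $j \geq i$. The element $g$ maps to zero in $A$ precisely when $g \in N$, so the whole lemma amounts to understanding membership in $N$ concretely. The backward direction is the easy one: if such an $M$ exists with $\sum_{i=1}^k f^{m_i,M}(g_i) = 0$, then I would write $g = \sum_i g_i = \sum_i \bigl(g_i - f^{m_i,M}(g_i)\bigr) + \sum_i f^{m_i,M}(g_i)$. Each term $g_i - f^{m_i,M}(g_i)$ lies in $N$ by definition (since $M \geq m_i$), and the remaining sum is $0$ by hypothesis, so $g \in N$ and hence maps to the trivial element.

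The forward direction is where the real content lies, and I expect it to be the main obstacle. Suppose $g \in N$, so $g$ is a finite $\mathbb{Z}$-linear combination of generators $x_\ell - f^{p_\ell, q_\ell}(x_\ell)$ with $x_\ell \in A_{p_\ell}$ and $q_\ell \geq p_\ell$. The plan is to choose $M$ to exceed every index appearing anywhere: all the $m_i$, all the $p_\ell$, and all the $q_\ell$. Then I would push everything forward into $A_M$ by applying the canonical maps. Concretely, consider the homomorphism $\Phi_M \colon G \to A_M$ that sends the $A_i$-component $a$ of an element (for $i \leq M$) to $f^{i,M}(a)$, assembled over all components; this is well defined on the finitely many nonzero components once $M$ dominates all relevant indices. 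The key observation is that $\Phi_M$ kills every generator of $N$ of the form $x_\ell - f^{p_\ell,q_\ell}(x_\ell)$, because by the cocycle condition (axiom~3) $f^{q_\ell,M}\bigl(f^{p_\ell,q_\ell}(x_\ell)\bigr) = f^{p_\ell,M}(x_\ell)$, so the two pieces map to the same element of $A_M$ and cancel.

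Applying $\Phi_M$ to the equation $g = \sum_\ell c_\ell\bigl(x_\ell - f^{p_\ell,q_\ell}(x_\ell)\bigr)$ therefore yields $\Phi_M(g) = 0$. On the other hand, $\Phi_M(g) = \sum_{i=1}^k f^{m_i,M}(g_i)$ since $g = \sum_i g_i$ with each $g_i \in A_{m_i}$ and $\Phi_M$ acts as $f^{m_i,M}$ on that component. Combining these gives exactly $\sum_{i=1}^k f^{m_i,M}(g_i) = 0$, as required. The only subtlety to handle carefully is the well-definedness of $\Phi_M$ as a homomorphism on the direct sum: since $G$ is a direct sum, it suffices to define $\Phi_M$ on each summand $A_i$ with $i \leq M$ by $a \mapsto f^{i,M}(a)$ and extend by the universal property of the direct sum, and the generator computation above confirms $N \subseteq \ker \Phi_M$. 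I would also note that $M > m_i$ (strict) can be arranged simply by enlarging $M$, since $f^{M,M'}$ composes consistently and replacing $M$ by any larger $M'$ preserves the vanishing via $f^{M,M'}$ applied to the identity $\sum_i f^{m_i,M}(g_i)=0$.
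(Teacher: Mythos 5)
Your proof is correct and takes essentially the same approach as the paper: the identical decomposition $g=\sum_i\bigl(g_i-f^{m_i,M}(g_i)\bigr)+\sum_i f^{m_i,M}(g_i)$ for the ``if'' direction, and for the ``only if'' direction the same device of pushing an expression of $g$ as a combination of generators of $N$ forward into $A_M$ and cancelling via the composition law $f^{j,M}\circ f^{i,j}=f^{i,M}$; your homomorphism $\Phi_M$ merely makes explicit the well-definedness point that the paper's computation leaves implicit. One small nitpick: $\Phi_M$, built via the universal property from the summands $A_i$ with $i\le M$, is a homomorphism on $\bigoplus_{i\le M}A_i$ rather than on all of $G$, so the claim $N\subseteq\ker\Phi_M$ should be read as applying only to generators with indices at most $M$ --- which is all your argument actually needs, since $M$ was chosen to dominate every index appearing in the chosen expression of $g$.
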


\begin{proof}
The 'if' part of the statement is straightforward. Indeed, if such a $M$ exists then $\mathfrak{q}(g)=(g_{1}-f^{m_1,N}(g_{1}))+\dots + (g_{k}-f^{m_k,N}(g_{k}))\in N$. 
\par
For the converse, we show that the statement holds for all the  elements in $N$.
An arbitrary element $g\in N$ is a sum of finitely many elements of the form $x-f^{i,j}(x)$,
$g=(x_{1}-f^{i_{1},j_{1}}(x_{1}))+\dots + (x_{t}-f^{i_{t},j_{t}}(x_{t}))$.  Let $M>i_{p},j_{p}$ for all $1\leq p\leq t$. 
Then,
\begin{gather*}
    \sum f^{i_{1},M}(x_{1})-f^{j_{1},M}(f^{i_{1},j_{1}}(x_{1}))+\dots + f^{i_{t},M}(x_{t})-f^{j_{t},M}(f^{i_{t},j_{t}}(x_{t}))\\
=\sum f^{i_{1},M}(x_{1})-f^{i_{1},M}(x_{1})+\dots + f^{i_{t},M}(x_{t})-f^{i_{t},M}(x_{t})\\
=0.
\end{gather*}

\end{proof}
We also recall the definition of a direct system of chain complexes and that of its direct limit.  
\begin{definition}[Direct system of chain complexes]
    Let $(A^{i}_{\ast})_{i}$ be a sequence of chain complexes and let $(f^{i,j})_{i\leq j}$ be a collection of maps such that 
    \begin{enumerate}
        \item $f^{i,j}$ is a chain map from $A_{\ast}^{i}$ to $A_{\ast}^{j}$ for all $i\leq j$,
        \item $f^{i,i}$ is the identity map for all $i$, and
        \item for all $i\leq j \leq k $, $f^{i,k}= f^{j,k}\circ f^{i,j}$. 
    \end{enumerate}   
    Then, the sequence $(A^{i}_\ast)_{i}$ together with $\{f^{i,j}\}$ is said to form a direct system of chain complexes. It is denote as $\langle A ^{i}_{\ast},f^{i,j}\rangle$.
\end{definition}
Given a chain map $f:A_{\ast}\rightarrow B_{\ast}$, let $f^{k}$ denote its restriction to the $k$-th chain group $f^{k}:A_{k}\rightarrow B_{k}$. Given a direct system of chain complexes $\langle(A^{i}_{\ast}),f^{i,j}\rangle$, for each $k$, taking the $k$-th chain groups $\{A^{i}_{k}\}_{i}$ together with $f^{k}_{i,j}$ forms a direct system $\langle (A^{i}_{k})_{i},f_{k}^{i,j}\rangle$ of abelian groups. Let $A_{k}:=\directlimit A^{i}_{k}$. Let $\partial:A_{k}\rightarrow A_{k-1}$ denote the natural boundary map induced by the boundary operators $\partial^{i}: A^{i}_{k}\rightarrow A^{i}_{k-1}$ on the individual $A_{i}$s. A more precise description of $\partial$ is as follows: let $\phi_{i}:A^{i}_{k-1}\rightarrow A_{k-1}$ denote canonical homomorphisms from $A^{i}_{k}$ to $A_{k}$. Then, $m^{i}:=\partial^{i}\circ \phi^{i}: A^{i}_{k}\rightarrow A_{k-1}$ yields a family of maps that satisfy $m^{i}=m^{j}\circ f^{i,j}$. There is a unique morphism $ \partial:A_{k}\rightarrow A_{k-1}$ such that $\partial\circ \psi_{i}=\phi_{i}$ .  
In this way, we obtain a chain complex $(A_{\ast},\partial)$ which we call to the direct limit of 
 the system $\langle A^{i}_{\ast},f^{i,j}\rangle$. 
 \par 
 Let $H_{k}(A^{i}_{\ast})$ denote the $k$-th homology group of the chain complex $A^{i}_{\ast}$. Let $H_{k}(f^{i,j}):H_{k}(A^{i}_{\ast}) \rightarrow H_{k}(A^{j}_{\ast})$ denote the map induced by $f^{i,j}$ on the corresponding homology groups. The collection of groups $H_{k}(A^{i}_{\ast})$ together with the homomorphisms $H_{k}(f^{i,j})$ forms a direct system of groups. The following theorem states that the direct limit of the system $\langle H_{k}(A^{i}_{k}), H_{k}(f^{i,j})\rangle$ is isomorphic to $H_{k}(A_{\ast})$. 
\begin{theorem}\label{homology_commutes_with_taking_direct_limit}
    Let $\langle(A^{i}_{\ast}),f^{i,j}\rangle$ denote a direct system of chain complexes and let $(A_{\ast},\partial)$ denote its direct limit. Then, 
    $$H_{k}(A_{\ast})\cong \directlimit H_{k}(A^{i}_{\ast}).$$
\end{theorem}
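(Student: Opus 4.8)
The plan is to build an explicit natural homomorphism
$\Phi \colon \directlimit H_{k}(A^{i}_{\ast}) \to H_{k}(A_{\ast})$ and then to show it is an isomorphism by checking surjectivity and injectivity separately; in each case the strategy is the same, namely to represent the relevant class by data living at a single finite stage of the system and then to push that data forward along the transition maps until the desired relation holds, the passage to a large enough stage being exactly what Lemma \ref{trivial} supplies. To construct $\Phi$, recall that each canonical map $\phi_{i}\colon A^{i}_{\ast}\to A_{\ast}$ is a chain map, so it induces $H_{k}(\phi_{i})\colon H_{k}(A^{i}_{\ast})\to H_{k}(A_{\ast})$. The chain-level identities $\phi_{j}\circ f^{i,j}=\phi_{i}$ descend to $H_{k}(\phi_{j})\circ H_{k}(f^{i,j})=H_{k}(\phi_{i})$, so the family $\{H_{k}(\phi_{i})\}$ is compatible with the homology direct system $\langle H_{k}(A^{i}_{\ast}),H_{k}(f^{i,j})\rangle$, and the universal property yields a unique $\Phi$ with $\Phi\circ\theta_{i}=H_{k}(\phi_{i})$, where $\theta_{i}$ denotes the canonical map into the direct limit of homologies.

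The one preliminary observation driving everything is that, because the system is directed, every element of a direct limit is represented at a single stage: from the $G/N$ construction, any $z\in A_{k}$ equals $\phi_{M}(w)$ for some index $M$ and some $w\in A^{M}_{k}$, obtained by pushing the finitely many summands of a representative to a common index. For surjectivity I would take a homology class in $H_{k}(A_{\ast})$ represented by a cycle $z$ and write $z=\phi_{M}(w)$. Since $\phi_{M}$ is a chain map, $\phi_{M}(\partial^{M}w)=\partial z=0$, so $\partial^{M}w\in A^{M}_{k-1}$ maps to zero in the limit; Lemma \ref{trivial} then gives $M'\ge M$ with $f^{M,M'}(\partial^{M}w)=0$. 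As $f^{M,M'}$ is a chain map this reads $\partial^{M'}\bigl(f^{M,M'}(w)\bigr)=0$, so $w'=f^{M,M'}(w)$ is a genuine cycle in $A^{M'}_{k}$, and by construction $\Phi\bigl(\theta_{M'}[w']\bigr)=[\phi_{M'}(w')]=[\phi_{M}(w)]=[z]$.

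For injectivity I would start with $\xi\in\directlimit H_{k}(A^{i}_{\ast})$ satisfying $\Phi(\xi)=0$, and again represent it at a stage: $\xi=\theta_{M}[w]$ for a cycle $w\in A^{M}_{k}$. Then $\Phi(\xi)=[\phi_{M}(w)]=0$ means $\phi_{M}(w)=\partial u$ for some $u\in A_{k+1}$, and writing $u=\phi_{M'}(v)$ with $v\in A^{M'}_{k+1}$ and $M'\ge M$ gives $\phi_{M'}\bigl(f^{M,M'}(w)-\partial^{M'}v\bigr)=0$. Lemma \ref{trivial} produces $M''\ge M'$ with $f^{M',M''}\bigl(f^{M,M'}(w)-\partial^{M'}v\bigr)=0$, so that $f^{M,M''}(w)=\partial^{M''}\bigl(f^{M',M''}(v)\bigr)$ is a boundary in $A^{M''}_{k}$. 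Hence $H_{k}(f^{M,M''})[w]=0$, and therefore $\xi=\theta_{M}[w]=\theta_{M''}\bigl(H_{k}(f^{M,M''})[w]\bigr)=0$.

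The conceptual content is simply that homology commutes with filtered colimits, and I do not expect any genuine difficulty in the mathematics. The one place to be careful — and the main obstacle if any — is the index bookkeeping: each application of Lemma \ref{trivial} must be preceded by reducing a statement about the colimit to an honest equation at a finite stage, and at every step one must use that both the transition maps $f^{i,j}$ and the canonical maps $\phi_{i}$ are \emph{chain} maps so that they commute with $\partial$. Keeping the distinction between ``is a cycle in the limit'' and ``is a cycle at a stage'' (and likewise for boundaries) straight is the only subtlety, and Lemma \ref{trivial} is precisely the tool that bridges the two.
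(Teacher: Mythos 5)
Your proof is correct, but note that the paper itself offers no proof of Theorem \ref{homology_commutes_with_taking_direct_limit}: it is stated as a known fact (homology commutes with filtered colimits) and then used in the proof of Theorem \ref{direct_limit}, so there is no argument in the paper to compare yours against; your write-up supplies the missing proof. Your argument is the standard one, and it meshes well with the machinery the paper does set up. The map $\Phi$ comes from the universal property exactly as the paper obtains the limit boundary map $\partial$; the chain-map identity $\partial\circ\phi_{i}=\phi_{i}\circ\partial^{i}$ that you invoke is precisely the defining property of $\partial$ in the paper's construction (the paper's formula $m^{i}=\partial^{i}\circ\phi^{i}$ is a typo for $\phi_{i}\circ\partial^{i}$, as a glance at the domains and codomains shows). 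Both of your appeals to Lemma \ref{trivial} are legitimate single-summand instances of it: applied to $\partial^{M}w\in A^{M}_{k-1}$ in the surjectivity half, and to $f^{M,M'}(w)-\partial^{M'}v\in A^{M'}_{k}$ in the injectivity half; in each case the lemma converts vanishing in the limit into an honest equation at a finite stage, which is exactly the bridge you identified as the crux. The one step you state without full detail is that every element of a direct limit over $\mathbb{N}$ has the form $\phi_{M}(w)$ for a single stage $M$; this does follow from the $G/N$ construction by pushing the finitely many summands of a representative to a common index, and the same remark is what justifies writing a class $\xi$ in the limit of homologies as $\theta_{M}[w]$ with $w$ a cycle at stage $M$. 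Minor bookkeeping aside (Lemma \ref{trivial} is stated with strict inequalities $M>m_{i}$, while you write $M'\geq M$, which is immaterial), your argument is complete and would serve as a proof of the theorem the paper leaves unproved.
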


\subsection{Anti-\cech systems}\label{section_anti_cech}
Let $X$ be a metric space. A cover $\mathcal{U}$ of $X$ by uniformly bounded subsets is said to be a \textit{good cover} if it is locally finite i.e. for each $x\in X$ the set  $\{U \in \mathcal{U}|x\in U\}$ is finite.
 A cover $\mathcal{V}$ is said to be a \textit{refinement} of another cover $\mathcal{V}$ if for every element $V\in \mathcal{V}$ is contained in some element $U\in \mathcal{U}$.

\begin{definition}[Anti-\cech system]

A \textit{Lebesgue number} for a cover $\mathcal{U}$ of a metric space is a number $r>0$ such that any set of diameter $r$ is included in some member of the covering. If every $ x\in X$ is contained in at most $k$ elements of $\mathcal{U}$ then we say that the \textit{multiplicity} of $\mathcal{U}$ is at most $k$.
We recall the definition of asymptotic dimension. 
\begin{definition}
   Let $X$ be a metric space. We say that the asymptotic dimension $asdim(X) \leq n$ if the following condition is satisfied:
\begin{center}
   for every $\lambda>0$, there exists a cover $\mathcal{U}$ of $X$ by uniformly bounded sets with Lebesgue number greater than $\lambda$ and multiplicity at most $n+1$. 
\end{center}
We say that $asdim(X)=k$ if the above condition holds for $n=k$ but not for $n=k-1$. 
\end{definition}
    A sequence $\{\mathcal{U}_{i}\}$ of good covers of a metric space  $X$. We will say that they form an Anti-\cech system if there is a sequence of real numbers $R_{n}\rightarrow \infty$ such that for all $n$,
    \begin{enumerate}
        \item Each set $U$ in $\mathcal{U}_{n}$ has diameter less than or equal to $R_{n}$. 
        \item The cover $\mathcal{U}_{n+1}$ has a Lebesgue number greater than $R_{n}$. 
    \end{enumerate}
\end{definition}
The two conditions in the above definition imply that for each $i<j$, $\mathcal{U}_{i}$ is a refinement of $\mathcal{U}_{j}$. 

\begin{corollary}\label{anti-cech}
    Let $X$ be a metric space such that $asdim(X)=k$. Then $X$ admits an anti-\cech system $\{\mathcal{U}_{i}\}$ such that each cover $\mathcal{U}_{i}$ has multiplicity at most $k+1$. 
\end{corollary}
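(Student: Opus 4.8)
The plan is to construct the covers $\mathcal{U}_{i}$ recursively, at each stage invoking the definition of asymptotic dimension with a Lebesgue-number threshold chosen to exceed the diameter bound recorded for the previous cover. Since $\mathrm{asdim}(X)=k$, for every $\lambda>0$ the definition yields a uniformly bounded cover with Lebesgue number greater than $\lambda$ and multiplicity at most $k+1$. Such a cover is automatically good in the required sense: a multiplicity bound of $k+1$ forces every point to lie in at most $k+1$ of its members, so the cover is (uniformly) locally finite, and it is uniformly bounded by hypothesis.

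I would set $R_{0}:=0$ and proceed by induction. Suppose $R_{n-1}$ has been defined. Applying the definition of asymptotic dimension with $\lambda=\max(R_{n-1},n)$, I would choose a uniformly bounded good cover $\mathcal{U}_{n}$ of multiplicity at most $k+1$ whose Lebesgue number exceeds $\max(R_{n-1},n)$, and let $b_{n}:=\sup_{U\in\mathcal{U}_{n}}\operatorname{diam}(U)<\infty$ be its uniform diameter bound. I would then define $R_{n}:=\max(b_{n},n)$. By construction every member of $\mathcal{U}_{n}$ has diameter at most $b_{n}\le R_{n}$, which is condition (1), while the Lebesgue number of $\mathcal{U}_{n+1}$ exceeds $\max(R_{n},n+1)\ge R_{n}$, which is condition (2). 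Finally $R_{n}\ge n\to\infty$, so $\{\mathcal{U}_{i}\}$ is an anti-\cech system with every cover of multiplicity at most $k+1$, as desired.

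The one point that requires care — and the only place the argument is not a mechanical transcription of the definition — is the guarantee that $R_{n}\to\infty$. It is tempting to let $R_{n}$ simply be the maximal diameter of $\mathcal{U}_{n}$, but this can fail to diverge: in a metric space whose points are spread out so that all ``small'' subsets have tiny diameter while every remaining subset has enormous diameter (for instance a sequence whose consecutive gaps grow without bound), one can find uniformly bounded covers of arbitrarily large Lebesgue number yet bounded maximal diameter, and the recursion would stall. Forcing $R_{n}\ge n$ sidesteps this: condition (1) only asks that diameters be at most $R_{n}$, so inflating $R_{n}$ is harmless, whereas the availability of covers with arbitrarily large Lebesgue number is precisely what the definition of asymptotic dimension supplies. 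This is why I would take the threshold to be $\max(R_{n-1},n)$ rather than $R_{n-1}$ alone. The degenerate case in which $X$ is bounded is handled separately and trivially by taking $\mathcal{U}_{i}=\{X\}$ and $R_{i}=\operatorname{diam}(X)+i$ for every $i$.
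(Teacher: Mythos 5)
Your proposal is correct and follows the same inductive construction as the paper: repeatedly invoke the definition of asymptotic dimension with a Lebesgue-number threshold exceeding the diameter bound recorded for the previous cover. The one substantive difference is your insistence that $R_{n}\geq n$, obtained by using the threshold $\max(R_{n-1},n)$ and setting $R_{n}=\max(b_{n},n)$, and this extra care is genuinely warranted: the paper's own proof takes $R_{n}=\sup\{\operatorname{diam}(U)\mid U\in\mathcal{U}_{n}\}$ and simply asserts that $R_{n}\to\infty$, which the construction as written does not force. For instance, in the space $X=\{2^{2^{n}}\mid n\in\mathbb{N}\}\subset\mathbb{R}$ (which has asymptotic dimension $0$) the cover by singletons is uniformly bounded, has multiplicity $1$, and has Lebesgue number at least $1$ while its diameter bound is $0$; the paper's recursion may therefore choose it at the first step, after which the requirement ``Lebesgue number at least $R_{n}$'' becomes vacuous and the recursion can return the singleton cover forever, producing $R_{n}=0$ for all $n\geq 1$, so that no divergent sequence of constants witnesses the anti-\v{C}ech property. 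Your inflation of $R_{n}$ costs nothing, since condition (1) of the definition is only an upper bound on diameters, and it is exactly what turns the divergence $R_{n}\to\infty$ from an unproved assertion into an automatic consequence of the construction.
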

\begin{proof}
We inductively construct a sequence $\{\mathcal{U}_{i}\}$ of covers with the required properties. Let $R_{0}=1$ and let $\mathcal{U}_{1}$ be a uniformly bounded cover of multiplicity at most $k+1$ such that the Lebesgue number of $\mathcal{U}_{1}$ is at least $R_{0}$. Let $R_{1}=sup\{diam(U)|U\in \mathcal{U}_{1}\}$.  Let $\mathcal{U}_{2}$ be a uniformly bounded cover of multiplicity at most $k+1$ and Lebesgue number at least $R_1$. Proceeding inductively one can obtain a sequence of covers $\{\mathcal{U}_{i}\}$ such that each $\mathcal{U}_{i}$ has multiplicity at most $k+1$ and there is a sequence $R_{n}$ of real numbers tending to infinity such that, for each $i$, every set in $\mathcal{U}_{i}$ has diameter less than or equal to $R_{i}$ and the cover $\mathcal{U}_{i+1}$ has a Lebesgue number greater than or equal to $R_{i}$. 
\end{proof}
\begin{definition}[Nerve of a cover]
 Let $X$ be a metric space and let  $\mathcal{U}$ be a cover of $X$ by uniformly bounded sets. The nerve of $\mathcal{U}$ ,denoted by $N(\mathcal{U})$, is the simplicial complex defined as follows:
 \begin{enumerate}
     \item The zero-skeleton $N(\mathcal{U})^{(0)}$ is $\mathcal{U}$. 
     \item An $(n+1)$-tuple $(U_{0},U_{1}, \dots,U_{n})$ spans an $n$-simplex if and only if the intersection $U_{0}\cap  U_{1}\dots\cap U_{n}$ is non-empty. 
 \end{enumerate}
\end{definition}

We also recall the definition of locally finite homology $H^{lf}_{n}(X)$ of a simplicial complex $X$. Let $X$ be a simplicial complex and let $C^{lf}_{n}(X)$ denote the abelian formed by taking all formal sums $\alpha=\sum\limits_{\sigma\in X^{(n)}} a_{\sigma}\sigma$ , $a_{\sigma} \in \mathbb{Z}$ such that every $\beta\in X^{(n-1)}$  the intersection between the star of $\beta$, $\textrm{st}_{X}(\beta)$, and the support of $\alpha$ is finite. Note that this condition is satisfied automatically if $X$ is locally finite. Then, $C_{\ast}^{lf}(X)$ is a chain complex where the boundary map $\partial:C^{lf}_{n}(X)\rightarrow C^{lf}_{n-1}(X)$ is defined as usual by the following expression on individual simplices 
:
$$\partial[x_{0},\dots,x_{n}]:= \sum_{i=0}^{n}(-1)^{i}[x_{0},\dots ,\hat{x}_{i},\dots,x_{n}] $$
and extended linearly on all $C^{lf}_{n}(X)$. The $n$-th homology of $C_{\ast}$ is said to be the \textit{locally finite homology} of $X$ and denoted by $H^{lf}_{n}(X)$. 
\begin{proposition}\label{anti}
Let $X$ be a metric space and 
    let $\{\mathcal{U}_{i}\}$ and $\{\mathcal{V}_{i}\}$ be any two anti-\cech systems for $X$. Let $\{N(\mathcal{U}_{i})\}$ and $\{N(\mathcal{V}_{i})\}$ be the corresponding direct systems of locally finite simplicial complexes. Then, 
    $$ \directlimit H^{lf}_{k}(N(\mathcal{U}_{i}))\cong \directlimit H^{lf}_{k}(N(\mathcal{V}_{i})).$$
\end{proposition}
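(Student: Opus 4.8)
The statement asserts that the colimit $\directlimit H^{lf}_{k}(N(\mathcal{U}_{i}))$ is independent of the chosen anti-\cech system, so the natural strategy is a \emph{cofinality (interleaving) argument}. The plan is to reduce the whole proposition to the single fact that a refinement of covers functorially induces a homomorphism on the locally finite homology of nerves, and then to exploit that any two anti-\cech systems mutually refine one another cofinally, so that the two direct systems of homology groups become interleaved and must have isomorphic limits.

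The first ingredient is functoriality. If $\mathcal{W}$ refines $\mathcal{W}'$, I would choose for each $W\in\mathcal{W}$ some $\rho(W)\in\mathcal{W}'$ with $W\subseteq\rho(W)$. Since $W_{0}\cap\cdots\cap W_{m}\neq\emptyset$ forces $\rho(W_{0})\cap\cdots\cap\rho(W_{m})\neq\emptyset$, the assignment $W\mapsto\rho(W)$ is a simplicial map $\rho\colon N(\mathcal{W})\to N(\mathcal{W}')$. I would then check that $\rho$ is proper in the sense required by locally finite homology (the preimage of each closed star meets only finitely many simplices), so that it induces $\rho_{\ast}\colon C^{lf}_{\ast}(N(\mathcal{W}))\to C^{lf}_{\ast}(N(\mathcal{W}'))$ and hence a map $\rho_{\ast}\colon H^{lf}_{k}(N(\mathcal{W}))\to H^{lf}_{k}(N(\mathcal{W}'))$; this properness is where the good-cover hypotheses (uniform boundedness together with local finiteness, which is exactly what makes the nerves locally finite) are used. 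The essential point is \emph{independence of the choices}: any two selection maps $\rho,\rho'$ are contiguous, because $\bigcap_{j}W_{j}$ is contained in the intersection of all the $\rho(W_{j})$ and all the $\rho'(W_{j})$, so these vertices span a common simplex of $N(\mathcal{W}')$ whenever the $W_{j}$ do. Contiguous proper simplicial maps are chain homotopic through a prism operator, and once that operator is proper we get $\rho_{\ast}=\rho'_{\ast}$ on $H^{lf}_{k}$. Thus each refinement $\mathcal{W}\to\mathcal{W}'$ determines a \emph{canonical} homomorphism on locally finite homology, and composites of refinements induce composites of these homomorphisms.

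With functoriality in hand I would build the interleaving. Let $R^{\mathcal{U}}_{i}$ and $R^{\mathcal{V}}_{i}$ record the diameter and Lebesgue data of the two systems. Since these tend to infinity, for each $i$ I can pick a monotone $\sigma(i)$ so that the Lebesgue number of $\mathcal{V}_{\sigma(i)}$ exceeds the diameter bound of $\mathcal{U}_{i}$; then every member of $\mathcal{U}_{i}$ lies inside a member of $\mathcal{V}_{\sigma(i)}$, so $\mathcal{U}_{i}$ refines $\mathcal{V}_{\sigma(i)}$ and we obtain canonical maps $\alpha_{i}\colon H^{lf}_{k}(N(\mathcal{U}_{i}))\to H^{lf}_{k}(N(\mathcal{V}_{\sigma(i)}))$. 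Symmetrically, a monotone $\tau$ gives maps $\beta_{j}\colon H^{lf}_{k}(N(\mathcal{V}_{j}))\to H^{lf}_{k}(N(\mathcal{U}_{\tau(j)}))$. Because canonical refinement maps compose to canonical refinement maps and agree with the structure maps of each direct system (all being induced by refinements between the same pairs of covers, hence contiguous), the families $\{\alpha_{i}\}$ and $\{\beta_{j}\}$ are morphisms of direct systems, and the composites $\beta_{\sigma(i)}\circ\alpha_{i}$ and $\alpha_{\tau(j)}\circ\beta_{j}$ coincide with the structure maps $H^{lf}_{k}(N(\mathcal{U}_{i}))\to H^{lf}_{k}(N(\mathcal{U}_{\tau\sigma(i)}))$ and $H^{lf}_{k}(N(\mathcal{V}_{j}))\to H^{lf}_{k}(N(\mathcal{V}_{\sigma\tau(j)}))$.

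Passing to direct limits, $\{\alpha_{i}\}$ and $\{\beta_{j}\}$ induce homomorphisms $\Phi\colon\directlimit H^{lf}_{k}(N(\mathcal{U}_{i}))\to\directlimit H^{lf}_{k}(N(\mathcal{V}_{i}))$ and $\Psi$ in the reverse direction, and the composite-equals-structure-map identities force $\Psi\circ\Phi$ and $\Phi\circ\Psi$ to be the identity on the respective colimits; this is precisely the abstract cofinality lemma for colimits, which can be verified by hand using Lemma \ref{trivial} to compare representatives. Hence the two colimits are isomorphic. I expect the \textbf{main obstacle} to be the locally finite refinement of the usual nerve functoriality: verifying that the selection maps are proper and, above all, that the contiguity chain homotopy is realized by a \emph{proper} prism operator so that it descends to $H^{lf}_{k}$. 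The standard contiguity argument is tailored to ordinary (compactly supported) homology, and one must take care that the homotopy operator does not destroy the local finiteness condition on chains; this is the one place where the geometric hypotheses, rather than pure algebra, genuinely enter.
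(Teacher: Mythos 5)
Your proposal is correct, and its core mechanism --- interleave the two systems by choosing, via the Lebesgue-number-versus-diameter comparison, indices so that $\mathcal{U}_{i}$ refines $\mathcal{V}_{\sigma(i)}$ and $\mathcal{V}_{j}$ refines $\mathcal{U}_{\tau(j)}$ --- is exactly the idea in the paper, but the two arguments are packaged quite differently. The paper never constructs two-directional morphisms of direct systems; instead it (1) proves that passing to a subsequence of an anti-\cech system does not change the colimit, by an explicit element-wise argument on $\bigoplus_{i}H^{lf}_{k}(N(\mathcal{U}_{i}))$ modulo the relation subgroup, using Lemma \ref{trivial} for both surjectivity and injectivity, and then (2) merges the two given systems into a single alternating anti-\cech system $\{\mathcal{W}_{i}\}$ (chosen by the same Lebesgue-number selection you use for $\sigma$ and $\tau$) that contains a subsequence of each, so that both colimits are identified with $\directlimit H^{lf}_{k}(N(\mathcal{W}_{i}))$. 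The advantage of the paper's merged-system trick is that all comparison maps live inside one direct system, so it never has to state the functoriality lemma your proof leans on; the advantage of your route is precisely that it makes that lemma explicit. Indeed, the issue you flag as the main obstacle --- that a refinement only induces a \emph{canonical} map on $H^{lf}_{k}$ after one checks that selection maps are proper, that any two selections are contiguous, and that the prism operator preserves local finiteness --- is genuinely needed by the paper as well (its merged system requires the composite $N(\mathcal{U}_{i})\to N(\mathcal{V}_{j})\to N(\mathcal{U}_{k})$ to agree on homology with the structure map $N(\mathcal{U}_{i})\to N(\mathcal{U}_{k})$ of the original system), but the paper sidesteps it by taking the direct systems $\{N(\mathcal{U}_{i})\}$, $\{N(\mathcal{V}_{i})\}$ and their refinement-induced maps as given data in the statement. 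So your proof is not missing anything the paper supplies; it is a more standard cofinality argument that surfaces, rather than suppresses, the one point where the geometry of good covers actually enters.
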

\begin{proof}
    The proof is divided into two parts. First, we show that the statement holds for any anti-\cech system $\{\mathcal{U}_{i}\}$ and a subsequence $\{\mathcal{U}_{j(i)}\}$ of it. In the second part, we show that given any two anti-\cech systems $\{\mathcal{U}_{i}\}$ and $\{\mathcal{V}_{i}\}$ one can form an anti-\cech system $\{\mathcal{W}_{i}\}$ such that it contains a subsequence of $\{\mathcal{U}_{i}\}$ and a subsequence of $\{\mathcal{V}_{i}\}$. This, in effect, shows that both $\directlimit H^{lf}_{k}(N(\mathcal{U}_{i}))$ and $\directlimit H^{lf}_{k}(N(\mathcal{U}_{i}))$ are isomorphic to $\directlimit H^{lf}_{k}(N(\mathcal{W}_{i}))$ and hence isomorphic to each other.\par
    Let $\{\mathcal{U}_{i}\}$ be an anti-\cech system and let $\{\mathcal{U}_{j(i)}\}$ be a subsequence of it. Then, there is a natural inclusion map $i:\bigoplus_{j} H^{lf}_{k}( N(\mathcal{U}_{j(i)}))\hookrightarrow \bigoplus_{i} H^{lf}_{k}( N(\mathcal{U}_{i}))$. Let $N_{1}$ denote the subgroup $\langle\bigcup_{i}\{ x-p_{j(i)}^{j(k)}(x)|x\in H^{lf}_{k}( N(\mathcal{U}_{j(i)})), i< k \}\rangle$ and let $N_{2}$ denote the subgroup  $\langle\bigcup_{i}\{ x-p_{i}^{k}(x)|x\in H^{lf}_{k}( N(\mathcal{U}_{i})),i< k \}\rangle$. $N_{1}$ is naturally a subgroup of $N_{2}$. Thus, we get a homomorphism  $q:\bigoplus_{j} H^{lf}_{k}( N(\mathcal{U}_{j(i)}))/N_{1}\rightarrow \bigoplus_{i} H^{lf}_{k}( N(\mathcal{U}_{i}))/N_{2}$. We claim that $q$ is an isomorphism.\par  
    \textit{$q$ is surjective :}
    $\bigoplus_{i} H^{lf}_{k}( N(\mathcal{U}_{i}))/N_{2}$ is generated by elements of the form $\bigoplus \alpha_{i}$ such that all components $\alpha_{i}$  of $\alpha$ are trivial except one. 
    Let $\alpha=\bigoplus \alpha_{i}$ be an element such that $\alpha_{k}=x$ and $\alpha_{i}=0$ for $i\neq k$. Let $m$ be an element of the subsequence $j(i)$ such that $m\geq k$. Consider the element $\beta=\bigoplus_{j(i)} \beta_{j(i)}\in \bigoplus_{j(i)} H^{lf}_{k}( N(\mathcal{U}_{j(i)}))/N_{1}$ such that $\beta_{m}=p_{k}^{m}(x)$ and $\beta_{j(i)}=0$ for all $j(i)\neq m$. Then, $q(\beta)=\alpha$.    
    \par   
    \textit{$q$ is injective :}
    Let $g\in ker(q)$. Let $\bigoplus_{i}g_{j(i)}\in \bigoplus_{i} H^{lf}_{k}( N(\mathcal{U}_{j(i)}))$ be a representative of $g$.   By definition, there exist finitely many indices $m_{1},\dots,m_{k}$ such that $ g_{j(i)}\neq 0$ if and only if $j(i)\in \{m_{1}\dots,m_{k}\}$. Since $q(g)=0$, by the 'only if' part of Lemma \ref{trivial}, there exist a $M$ such that $M>j(i)$ for all $j(i)\in \{m_{1}\dots,m_{k}\}$
 and $\sum _{i=1}^{k}p^{M}_{j(i)}(g_{j(i)})=0$. Let $M'$  be an element of the subsequence $j(i)$ such that $M'>M$. Then, $\sum _{i=1}^{k}p^{M'}_{j(i)}(g_{j(i)})=\sum _{i=1}^{k}p^{M'}_{M}\circ p^{M}_{j(i)}(g_{j(i)})=p^{M'}_{M}(\sum _{i=1}^{k} p^{M}_{j(i)}(g_{j(i)}))=0$. Using the 'if' part of Lemma \ref{trivial}, we have that $g=0$.     
    \par
   Given two anti-\cech systems $\{\mathcal{U}_{i}\}$ and $\{\mathcal{V}_{i}\}$ we give a construction of an anti-\cech system $\{\mathcal{W}_{i}\}$ such that $\{\mathcal{W}_{i}\}$ contains a subsequence of $\{\mathcal{U}_{i}\}$ and a subsequence of $\{\mathcal{V}_{i}\}$. 
  Let $\mathcal{W}_{1}=\mathcal{U}_{1}$ and let $\lambda_{1}=\textrm{sup}_{W\in \mathcal{W}_{1}}diam(W)$. For some index $i$, the cover $ \mathcal{V}_{i}$ will have Lebesgue number greater than or equal to $\lambda_{1}$ and let $j$ be the smallest such index. We then define $\mathcal{V}_{j}$ to be the next term in the sequence, $\mathcal{W}_{2}=\mathcal{V}_{j}$. 
  By induction, assume that we have constructed the first $N$ elements $\mathcal{W}_{1},\dots ,\mathcal{W}_{N}$ of the sequence. Let $\lambda_{N}=\textrm{sup}_{W\in \mathcal{W}_{N}}diam(W)$. If $N$ is even we define $\mathcal{W}_{N+1}$ to be $\mathcal{V}_{i}$, where $i$ is the smallest index such that the cover $\mathcal{V}_{i}$ has Lebesgue number greater than $\lambda_{N}$. 
  If $N$ is odd we define $\mathcal{W}_{N+1}$ to be $\mathcal{U}_{i}$, where $i$ is the smallest index such that the cover $\mathcal{U}_{i}$ has Lebesgue number greater than $\lambda_{N}$.
  The sequence $\mathcal{W}_{i}$ alternates between elements of $\{\mathcal{U}_{i}\}$ and $\{\mathcal{V}_{i}\}$. It is an anti-\cech system by construction. This concludes the proof. 
\end{proof}

\section{Proof of Theorem \ref{main}}
In this section, we give a proof of Theorem \ref{main}. We first prove Theorem \ref{direct_limit} which allows us to compute the coarse homology of a space using an Anti-\cech system.  

The following theorem can be found in the PhD thesis of S.M. Hair \cite{SMHairThesis} albeit without a proof. It has also been stated in \cite{Roe} where it is taken to be the definition of Coarse homology rather than a theorem. We provide a proof here for the sake of completeness. 
\begin{theorem}\label{direct_limit}
    Let $X$ be a metric space and let $\{\mathcal{U}_{i}\}$ be a anti-\cech system for $X$. Then there is an isomorphism 
    $$\HX_{k}(X)\cong \directlimit H^{lf}_{k}(N(\mathcal{U}_{i}))$$
\end{theorem}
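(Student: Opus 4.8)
The plan is to realise $\HX_k(X)$ as a direct limit over an increasing scale parameter and then to compare this scale filtration with the direct system of nerves by means of two families of chain maps going in opposite directions, in the spirit of the interleaving argument used in Proposition~\ref{anti}. For $R>0$ let $\CX^{(R)}_k(X)\subseteq \CX_k(X)$ denote the subgroup of those chains all of whose simplices have diameter at most $R$; this is a subcomplex, and since every coarse chain satisfies condition (2) for some constant, we have a directed union $\CX_k(X)=\bigcup_{R}\CX^{(R)}_k(X)$. Choosing the cofinal sequence of scales $R_n\to\infty$ furnished by the anti-\cech system, Theorem~\ref{homology_commutes_with_taking_direct_limit} gives $\HX_k(X)\cong \ulalim_{n} H_k(\CX^{(R_n)}_{\ast}(X))$. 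Note that $\CX^{(R)}_{\ast}(X)$ is nothing but the ordered, locally finite simplicial chain complex of the Rips complex at scale $R$, so this already expresses $\HX_k(X)$ as a direct limit of locally finite homologies of a system of complexes; the remaining task is to match this system with $\{N(\mathcal{U}_i)\}$.

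Next I would construct the comparison maps. In one direction, choosing a point $c_U\in U$ for each $U\in\mathcal{U}_i$ defines $\alpha_i\colon C^{lf}_{\ast}(N(\mathcal{U}_i))\to \CX^{(2R_i)}_{\ast}(X)$ by $(U_0,\dots,U_k)\mapsto(c_{U_0},\dots,c_{U_k})$; if $U_0\cap\cdots\cap U_k\neq\varnothing$ then the chosen points lie within $2R_i$ of one another, so the image is a legitimate coarse simplex. In the other direction, given a cover $\mathcal{U}$ with Lebesgue number $L$, choose for each $x\in X$ a member $\pi(x)\in\mathcal{U}$ containing the ball $B(x,L/2)$; then for any coarse simplex $(x_0,\dots,x_k)$ of diameter $<L/2$ the point $x_0$ lies in every $B(x_j,L/2)\subseteq \pi(x_j)$, so $\pi(x_0)\cap\cdots\cap\pi(x_k)\neq\varnothing$ and $(x_0,\dots,x_k)\mapsto(\pi(x_0),\dots,\pi(x_k))$ is a genuine simplicial chain map $\beta\colon \CX^{(R)}_{\ast}(X)\to C^{lf}_{\ast}(N(\mathcal{U}))$ whenever $L>2R$. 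One checks that both families respect the locally finite conditions (a routine verification from the good-cover hypotheses) and are compatible, up to the choices made, with the refinement maps $f^{i,j}$ and with the inclusions $\CX^{(R)}_{\ast}\hookrightarrow\CX^{(R')}_{\ast}$.

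Finally I would verify the two compositions. Because $c_{\pi(x)}\in\pi(x)$ lies within the mesh of $x$, the composite $\alpha\circ\beta$ moves each vertex a bounded distance and is therefore chain homotopic to the inclusion of scale complexes via the usual proximity (prism) homotopy. Dually, for $\mathcal{U}_j$ taken coarse enough, $\beta\circ\alpha$ sends a vertex $U$ of $N(\mathcal{U}_i)$ to a member $\pi(c_U)$ of $\mathcal{U}_j$ that, together with $f^{i,j}(U)$, still contains any common point of a simplex on $U$; hence the two maps are contiguous and induce the same map on homology. Thus $\alpha$ and $\beta$ interleave the two direct systems with compositions equal, on homology, to their respective structure maps, so exactly the cofinality/interleaving argument of Proposition~\ref{anti} identifies the two limits and yields $\HX_k(X)\cong\directlimit H^{lf}_k(N(\mathcal{U}_i))$. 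I expect the main obstacle to be the $\beta$ direction together with these homotopies: arranging the point-to-nerve assignment so that small simplices actually map to simplices (the $B(x,L/2)$ trick), and then checking that the interleaving compositions really agree with the structure maps while every chain involved stays locally finite. The scale bookkeeping needed to line up the diameters $R_i$, the mesh, and the Lebesgue numbers is the fiddly heart of the argument.
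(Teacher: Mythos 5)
Your first step (filtering $\CX_{\ast}(X)$ by scale and invoking Theorem \ref{homology_commutes_with_taking_direct_limit} to obtain $\HX_{k}(X)\cong \ulalim_{n} H_{k}(\CX^{(R_n)}_{\ast}(X))$) coincides with the paper's, but from there the routes genuinely diverge: the paper invokes Proposition \ref{anti} to replace the given system by the special system of covers by \emph{all} balls of radius $\lambda_{i}$, for which nerve simplices correspond bijectively to coarse simplices and no homotopies are needed at all, whereas you keep the arbitrary system $\{\mathcal{U}_{i}\}$ and interleave it with the scale filtration via the maps $\alpha_{i}$ and $\beta$. Your $\beta$ direction and the scale bookkeeping are fine. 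The genuine gap is the claim that $\alpha_{i}\colon C^{lf}_{\ast}(N(\mathcal{U}_{i}))\to \CX^{(2R_{i})}_{\ast}(X)$ ``respects the locally finite conditions'': it does not, because the paper's local finiteness for nerve chains is \emph{star-finiteness} (each lower-dimensional simplex lies in finitely many simplices of the support), while membership in $\CX_{\ast}(X)$ requires that every \emph{bounded subset of $X$} contain only finitely many support simplices, and the first condition does not imply the second after replacing each vertex $U$ by the point $c_{U}$.

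Concretely, cover $\mathbb{R}$ by the intervals $[n,n+2]$, $n\in\mathbb{Z}$, together with, for each $k\geq 1$, a pair of overlapping tiny intervals $T_{2k},T_{2k+1}$ near $1/k$, the pairs chosen pairwise disjoint for distinct $k$. This is a good cover of finite multiplicity with Lebesgue number at least $1$, and such covers can occur at every stage of an anti-\cech system. The $1$-chain $c=\sum_{k}(T_{2k},T_{2k+1})$ is star-finite, hence lies in $C^{lf}_{1}(N(\mathcal{U}))$, yet $\alpha(c)=\sum_{k}(c_{T_{2k}},c_{T_{2k+1}})$ has infinitely many simplices with all vertices in the bounded set $[0,2]$, so it violates condition (1) in the definition of $\CX_{1}(\mathbb{R})$; thus $\alpha$ is not even well defined. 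The same mismatch falsifies your side remark that $\CX^{(R)}_{\ast}(X)$ \emph{is} the locally finite simplicial chain complex of the Rips complex. This is not incidental bookkeeping: reconciling the two finiteness conditions is exactly the crux of the theorem, and it is also the point where the paper itself is most fragile (its map $\theta^{i}$ is a bijection on simplices between chain groups carrying different finiteness conditions). To repair your argument you must either define local finiteness of nerve chains metrically (finitely many support simplices meeting every bounded region of $X$, transported through the cover), rechecking that $\beta$, the homotopies, and Proposition \ref{anti} survive with that definition, or first reduce via Proposition \ref{anti} to an anti-\cech system in which every bounded subset of $X$ meets only finitely many cover elements, and then justify that such a system exists for the spaces under consideration.
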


\begin{proof}
    In view of Proposition \ref{anti}, it suffices to exhibit an isomorphism between $\HX_{k}(X)$ and $\varinjlim H^{lf}_{k}(N(\mathcal{U}_{i}))$ for any one anti-\cech system $\{ \mathcal{U}_{i} \}$.
    The proof consists of two steps. In the first step, we express the chain complex $\CX_{\ast}(X)$ as the direct limit of a direct system $\langle\CX_{\ast}^{\lambda_{i}}(X),c_{i.j}\rangle$ of subcomplexes. In the second step, we show that the direct system $\langle\CX_{\ast}^{\lambda_{i}}(X),c_{i.j}\rangle$ is isomorphic to the direct system $\langle C^{lf}_{\ast}(N(\mathcal{U}_{i})),p^{j}_{i}\rangle$ for a particular Anti-\cech system $\{\mathcal{U}_{i}\}$. \par
    Let $\{ \lambda_{i} \}$ be a monotonically increasing sequence of positive reals such that $\lambda_i \to \infty$.  Let $\CX_{\ast}(X)$ be as defined in Section 2.2, and let $\CX_{\ast}^{\lambda_{i}}(X)$ denote the subcomplex of $\CX_{\ast}(X)$ consisting of chains $\alpha = \sum\limits_{\sigma \in X^{(n+1)}} a_{\sigma} \sigma$ such that:
    \begin{enumerate}
        \item For any bounded subset $B$ of $X$, there are only finitely many simplices $\sigma$ such that $a_{\sigma} \neq 0$ and $\sigma$ has all of its vertices in $B$, and 
        \item For any $\sigma = (x_{0}, x_{1}, \dots, x_{n}) \in \operatorname{supp}(\alpha)$, there exists $y \in X$ such that $y \in B_{\lambda_{i}}(x_{i})$ for all $0 \leq i \leq n$.
    \end{enumerate}

We denote the boundary map of $\CX^{\lambda_{i}}_{\ast}(X)$ by $\partial^{\lambda_{i}}$. It should be clear that $\partial^{\lambda_{i}}$ is simply the restriction of the boundary map $\partial$ of $\CX_{\ast}(X)$ to the subcomplex $\CX^{\lambda_{i}}_{\ast}(X)$. 
    Note that there is a natural inclusion map $c^{k}_{i,j}:\CX_{k}^{\lambda_{i}}(X)\rightarrow \CX_{k}^{\lambda_{j}}(X)$ for all $i<j$ and for all $k$. It is not hard to see that $c^{k}_{i,j}$  commutes with the boundary operator and extends to a chain map $c_{i,j}:\CX_{\ast}^{\lambda_{i}}(X)\rightarrow \CX_{\ast}^{\lambda_{j}}(X)$. This gives rise to 
 a direct system of chain complexes $\langle\CX_{\ast}^{\lambda_{i}}(X), c_{i,j}\rangle$. Additionally, for each $k$, the collection $\langle \CX^{\lambda_{i}}_{k}(X),c^{k}_{i,j}\rangle$ is a direct system of abelian groups and the direct limit of this system is naturally isomorphic to $\CX_{k}(X)$. The direct limit of the system $\langle\CX_{\ast}^{\lambda_{i}}(X), c_{i,j}\rangle$ is the chain complex obtained by taking the abelian groups $\CX_{k}(X)$ together with boundary map obtained by taking the direct limit of the boundary maps $\partial^{\lambda_{i}}$, $\delta:=\varinjlim_{i}(\partial^{\lambda_{i}})$. It can be easily verified that the map $\delta$ coincides with the original boundary map $\partial$. 
 \par
 The chain map $c_{i,j}$ induces a map $H_{k}(c_{i,j})$ on the corresponding homology groups $H_{k}(c_{i,j}):H_{k}(\CX_{\ast}^{\lambda_{i}}(X))\rightarrow H_{k}(\CX_{\ast}^{\lambda_{j}}(X))$. For each $k$, the collection of groups 
$\langle H_{k}(\CX_{\ast}^{\lambda_{i}}(X)),H_{k}(c_{i,j})\rangle$ forms a direct system. By Theorem \ref{homology_commutes_with_taking_direct_limit}, the direct limit $\varinjlim_{i} H_{k}(\CX_{\ast}^{\lambda_{i}}(X))$ is isomorphic to the $k$-th homology group of the complex $\CX_{\ast}(X)$, i.e. $\HX_{k}(X)\cong \varinjlim_{i} H_{k}(\CX_{\ast}^{\lambda_{i}}(X))$. 
  \par 
  Let $\mathcal{U}_{i}$ denote the cover of $X$ consisting of balls of radius $\lambda_{i}$. Then $\{ \mathcal{U}_{i} \}$ forms an anti-\cech  system. Let $C^{lf}_{\ast}(N(\mathcal{U}_{i}))$ denote the chain complex of locally finite chains on $N(\mathcal{U}_{i})$. For all $n$, there is a natural bijection between $n$-simplices in $N(\mathcal{U}_{i})$ and simplices $\sigma \in X^{(n+1)}$, which sends $(x_{0}, x_{1}, \dots, x_{n})$ to the simplex $(B_{\lambda_{i}}(x_{0}), B_{\lambda_{i}}(x_{1}), \dots, B_{\lambda_{i}}(x_{n}))$. This bijection induces an isomorphism
    \[
    \theta^{i}_{n} : \CX_{n}^{\lambda_{i}}(X) \rightarrow C^{lf}_{n}(N(\mathcal{U}_{i}))
    \]
    for all $n$. The isomorphisms $p_{n}$ extend to a chain map
    \[
    \theta^{i} : \CX_{\ast}^{\lambda_{i}}(X) \rightarrow C^{lf}_{\ast}(N(\mathcal{U}_{i})).
    \]
    $p$ is easily seen to be a chain homotopy equivalence by considering the map induced by the inverse of the bijection between $n$-simplices in $N(\mathcal{U}_{i})$ and simplices $\sigma \in X^{(n+1)}$. 
    As a result, the corresponding homology groups are isomorphic:
    \[ H_{k}(\theta^{i}):
    H_{k}(\CX_{\ast}^{\lambda_{i}}(X)) \xrightarrow{\sim} H^{lf}_{k}(N(\mathcal{U}_{i})). 
    \]
   It remains to show that the isomorphisms $H_{k}(p)$ extend to an isomorphism between direct systems $
   \langle H_{k}(\CX_{\ast}^{\lambda_{i}}(X), H_{k}(c_{i,j})\rangle$ and $\langle H^{lf}_{k}(N(\mathcal{U}_{i}),H_{k}(p^{j}_{i})\rangle$. 
   It can be easily verified that the following diagram commutes for all $i\leq j$,
   \[ \begin{tikzcd}
 H_{k}(\CX_{\ast}^{\lambda_{i}}(X))\arrow{r}{ H_{k}(\theta^{i})} \arrow[swap]{d}{H_{k}(c_{i,j})} & H^{lf}_{k}(N(\mathcal{U}_{i})) \arrow{d}{H_{k}(p^{j}_{i})} \\%
H_{k}(\CX_{\ast}^{\lambda_{j}}(X)) \arrow{r}{H_{k}(\theta^{j})}& H^{lf}_{k}(N(\mathcal{U}_{j}))
\end{tikzcd}
\]
   It follows that the corresponding direct limits as isomorphic, $$\HX_{k}(X)\cong\varinjlim_{i} H_{k}(\CX_{\ast}^{\lambda_{i}}(X)) \cong \directlimit H^{lf}_{k}(N(\mathcal{U}_{i})).$$
\end{proof}
\begin{corollary}\label{final}
    Let $X$ be a metric space such that $X$ has finite asymptotic dimension. Then,
    $$asdim(X)\geq \textrm{sup}\{k|\HX_{k}(X)\neq 0 \}.$$
\end{corollary}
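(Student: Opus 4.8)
The plan is to combine Theorem \ref{direct_limit} with the multiplicity control provided by Corollary \ref{anti-cech}. Set $n = asdim(X)$, which is finite by hypothesis. By Corollary \ref{anti-cech}, $X$ admits an anti-\cech system $\{\mathcal{U}_{i}\}$ in which every cover has multiplicity at most $n+1$. I would then argue that this choice forces $\HX_{k}(X) = 0$ for all $k > n$, which is exactly the inequality $asdim(X) \geq \sup\{k \mid \HX_{k}(X) \neq 0\}$ that we want.

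The central step is to observe that a bound on the multiplicity of a cover bounds the dimension of its nerve. Recall that an $m$-simplex of $N(\mathcal{U}_{i})$ is a tuple $(U_{0}, \dots, U_{m})$ of members of $\mathcal{U}_{i}$ with $U_{0} \cap \dots \cap U_{m} \neq \emptyset$; any point in this intersection lies in at least $m+1$ elements of $\mathcal{U}_{i}$. Since the multiplicity of $\mathcal{U}_{i}$ is at most $n+1$, no point can lie in more than $n+1$ members, so $N(\mathcal{U}_{i})$ contains no simplex of dimension exceeding $n$. Hence $C^{lf}_{k}(N(\mathcal{U}_{i})) = 0$, and a fortiori $H^{lf}_{k}(N(\mathcal{U}_{i})) = 0$, for every $k > n$ and every $i$.

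The final step is to pass to the direct limit. For each $k > n$ the direct system $\langle H^{lf}_{k}(N(\mathcal{U}_{i})), H_{k}(p^{j}_{i}) \rangle$ consists entirely of trivial groups, so its direct limit is trivial. Applying Theorem \ref{direct_limit} to this particular anti-\cech system then yields $\HX_{k}(X) \cong \directlimit H^{lf}_{k}(N(\mathcal{U}_{i})) = 0$ for all $k > n$. Consequently $\HX_{k}(X)$ vanishes whenever $k > asdim(X)$, which gives $\sup\{k \mid \HX_{k}(X) \neq 0\} \leq asdim(X)$, as required.

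I do not expect any serious obstacle here: the one point to state carefully is the passage from the combinatorial multiplicity bound to the vanishing of the nerve's chain groups in degrees above $n$, but this is immediate from the definitions of nerve and multiplicity. The remainder is purely formal, relying only on Theorem \ref{direct_limit} and on the fact that a direct limit of trivial groups is trivial.
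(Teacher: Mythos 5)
Your proof is correct and follows essentially the same route as the paper: invoke Corollary \ref{anti-cech} to get an anti-\v{C}ech system whose covers have multiplicity at most $asdim(X)+1$, note that the nerves then have dimension at most $asdim(X)$, and conclude via Theorem \ref{direct_limit}. You simply spell out more explicitly the two steps the paper leaves implicit (multiplicity bounds nerve dimension, and a direct limit of trivial groups is trivial), which is fine.
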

\begin{proof}
  Let $asdim(A)=n$. Then, by Corollary \ref{anti-cech}, there exists a anti-\cech system $ \{\mathcal{U}_{i}\}$ for $X$ such that each cover $\mathcal{U}_i$ has multiplicity $n+1$.  Thus, the corresponding nerves $N(\mathcal{U}_{i})$ all have dimension at most $n$. It follows from Theorem \ref{direct_limit} that $\HX_{i}(X)$ vanishes for all $i>n$. 
\end{proof}
Finally, we give a proof of Theorem \ref{main}:
\begin{proof}
    Let $X$ be a coarse $PD(n)$ space. Let $A$ be a subspace of $X$ such that $A$ separates $X$. By Proposition \ref{sep} and Theorem \ref{banerjeeokun}, $\HX_{n-1}(A)\cong\HX^{1}(X-A)\neq 0$. An application of Corollary \ref{final} tells us that the asymptotic dimension of $A$ is at least $n-1$.
\end{proof}
\bibliographystyle{plain}
\bibliography{main}
\end{document}